\numberwithin{equation}{section}
\definecolor{myred}{rgb}{0.75,0,0}
\definecolor{mygreen}{rgb}{0,0.5,0}
\definecolor{myblue}{rgb}{0,0,0.65}
    \def\CM{{\mathbb{C}}}
    \def\FM{{\mathbb{F}}}
    \def\KM{{\mathbb{K}}}
    \def\NM{{\mathbb{N}}}
    \def\OM{{\mathbb{O}}}
    \def\QM{{\mathbb{Q}}}
    \def\RM{{\mathbb{R}}}
    \def\ZM{{\mathbb{Z}}}
    \def\EC{{\mathcal{E}}}
    \def\FC{{\mathcal{F}}}
    \def\LC{{\mathcal{L}}}
\def\a{\alpha}
\def\G{\Gamma}
\def\l{\lambda}
\def\L{\Lambda}
\newcommand{\nc}{\newcommand} \newcommand{\renc}{\renewcommand}
\newcommand{\rdots}{\mathinner{ \mkern1mu\raise1pt\hbox{.}
    \mkern2mu\raise4pt\hbox{.}
    \mkern2mu\raise7pt\vbox{\kern7pt\hbox{.}}\mkern1mu}}
\def\ov{\overline}
\def\un{\underline}
\def\to{\rightarrow}
\def\from{\leftarrow}
\def\longto{\longrightarrow}
\nc{\triright}{\stackrel{[1]}{\to}}
\nc{\longtriright}{\stackrel{[1]}{\longto}}
\nc{\Hb}{H^\bullet}
\nc{\Br}{\mathcal{B}}
\nc{\HotRR}{{}_R\mathcal{K}_R}
\nc{\HotR}{\mathcal{K}_R}
\nc{\excise}[1]{}
\nc{\defect}{\text{df}}
\nc{\h}[1]{\underline{H}_{#1}}
\nc{\Ga}{\mathbb{G}_a} 
\nc{\Gm}{\mathbb{G}_m} 
\nc{\Perv}{{\mathbf{P}}}
\nc{\IH}{{\mathrm{IH}}}
\nc{\ic}{\mathbf{IC}}
\nc{\gl}{{\mathfrak{gl}}}
\renc{\sl}{{\mathfrak{sl}}}
\renc{\sp}{{\mathfrak{sp}}}
\renc{\Im}{\textrm{Im}}
\nc{\HBM}{H^{BM}}
 \DeclareMathOperator{\Hom}{Hom}
\DeclareMathOperator{\supp}{supp} 
\DeclareMathOperator{\End}{End} 
\DeclareMathOperator{\Proj}{Proj}
\DeclareMathOperator{\proj}{proj}
\DeclareMathOperator{\Rep}{Rep}
\DeclareMathOperator{\rank}{rank}
\DeclareMathOperator{\Seq}{Seq}
\newtheorem{thm}{Theorem}[section]
\newtheorem{lem}[thm]{Lemma}
\newtheorem{prop}[thm]{Proposition}
\newtheorem{conj}[thm]{Conjecture}
\theoremstyle{definition}
\newtheorem{Eg}[thm]{Example}
\theoremstyle{remark}
\newtheorem{remark}[thm]{Remark}
\DeclareMathOperator{\Ext}{Ext}
\newcommand{\into}{\hookrightarrow}
\nc{\simto}{\stackrel{\sim}{\to}}
\DeclareMathOperator{\gdim}{\underline{dim}}
\title{On an analogue of the James conjecture}
\author{Geordie Williamson}
\address{Max-Planck-Institut f\"ur Mathematik, Vivatsgasse 7, 53111 Bonn, Germany}
\email{geordie@mpim-bonn.mpg.de}
\urladdr{http://people.mpim-bonn.mpg.de/geordie/}
\begin{document}

\begin{abstract}We give a counterexample to the most optimistic
  analogue (due to Kleshchev and Ram) of the James conjecture for
  Khovanov-Lauda-Rouquier algebras associated to
  simply-laced Dynkin diagrams. The first counterexample occurs in type
  $A_5$ for $p = 2$ and involves
  the same singularity used by Kashiwara and Saito to show the
  reducibility of the characteristic variety of an intersection
  cohomology $D$-module on a quiver variety. Using recent results of Polo one
  can give counterexamples in type $A$ in all characteristics.
\end{abstract}

\maketitle

\begin{center}
  \emph{Dedicated to Jimi}
\end{center}

\section{Introduction}

A basic question in representation theory asks for the dimensions of the simple
modules for the symmetric group over an arbitrary field. Despite over
a century's effort even a conjectural formula remains out of reach.

Any simple representation in characteristic zero may be represented by
integral matrices, and one obtains a representation in
characteristic $p$ by reducing these matrices modulo $p$. This induces a well-defined
map on Grothendieck groups, and the matrix of this map (in the basis
of simple modules) is the ``decomposition matrix''. Knowledge of the
decomposition matrices for the symmetric group would give a formula
for the dimensions of the simple modules in characteristic $p$.

The group algebra of the symmetric group has a deformation given by
the Hecke algebra, and if one specialises the
quantum parameter to a $p^{th}$-root of unity one obtains an
algebra in characteristic zero whose representation theory is 
similar to that of the symmetric group in characteristic
$p$. The Hecke algebra allows one to factor the decomposition
matrix as the product of two matrices: the first controls
specialising the quantum parameter (a characteristic zero question),
and the second controls reduction modulo $p$. Moreover, in 1990 James \cite{J}
conjectured that this second ``adjustment'' matrix should (under
certain explicit lower bounds on the characteristic) be trivial.\footnote{After
  this article was submitted the author discovered a way of producing
  counterexamples to Lusztig's conjecture. These
  results may be used to deduce counterexamples to the James
  conjecture \cite{W}.}

The first matrix (controlling the characters of simple modules for the
Hecke algebra at a $p^{th}$ root of unity) may be calculated using
Schur-Weyl duality and Lusztig's character formula for quantum groups
at a root of unity \cite{L0}. More recently, Lascoux,
Leclerc and Thibon \cite{LLT} formulated conjectures giving a 
faster algorithm to calculate these 
matrices in terms of the dual canonical basis of Fock space for the
quantum affine special linear group. These conjectures were
subsequently proved by Ariki \cite{A}. (A proof was also announced by
Grojnowski.) Hence the Hecke algebra problem has been solved and one
would like to know when the James conjecture is valid.

Over the last decade these ideas have been gathered under the umbrella
of categorification and the important role played by
Khovanov-Lauda-Rouquier (KLR) algebras \cite{KL,2km} has become clear. This is
mainly thanks to the Brundan-Kleshchev isomorphism \cite{BK}: the
group algebras of symmetric groups, as well as Hecke algebras at roots
of unity are isomorphic to cyclotomic KLR algebras. It follows that the
representation theory of symmetric groups and Hecke algebras acquires a
non-trivial grading.  (The reader is
referred to \cite{K} for a survey of these and related
ideas.)

The Brundan-Kleshchev isomorphism identifies the symmetric group algebra in
characteristic $p$ and Hecke algebras at a $p^{th}$ root of unity with
KLR algebras associated to a cyclic Dynkin quiver with $p$ nodes
(considered over a field of characteristic $p$ and $0$
respectively). Using this isomorphism the James conjecture may be translated into the natural
statement that the decomposition numbers for these KLR algebras are
trivial, under certain explicit lower bounds on $p$.

The upshot is that KLR algebras associated to cyclic quivers could
help us make progress on one of the oldest questions in representation theory.
This begs the question: what do KLR algebras associated to Dynkin
quivers mean? One might hope that their modular representation theory
is simpler, and that we can garner a clue as to how
to approach the cyclic case. The following conjecture was proposed
by Kleshchev and Ram \cite[Conjecture 7.3]{KR} as a finite analogue of the James conjecture:

\begin{conj} \label{conj:KR}
  The decomposition numbers for KLR algebras associated to Dynkin
  quivers are trivial.
\end{conj}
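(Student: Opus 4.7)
The plan is to prove Conjecture \ref{conj:KR} via the geometric incarnation of KLR algebras due to Varagnolo-Vasserot and Rouquier: for a Dynkin quiver $Q$ and dimension vector $\nu$, the KLR algebra $R(\nu)$ over a field $k$ is Morita-equivalent to the $G_\nu$-equivariant $\Ext$-algebra of the Lusztig sheaf on the representation variety $E_\nu$. Under this equivalence, graded simple $R(\nu)$-modules are in bijection with the $G_\nu$-orbits $\OC_\lambda \subset E_\nu$, and the indecomposable projective cover of $L_k(\lambda)$ corresponds to $\ic(\ov{\OC_\lambda}, k)$. Working with integral coefficients $\ic(\ov{\OC_\lambda}, \ZM)$ and reducing modulo $p$, a universal-coefficient argument shows that the decomposition number between the characteristic-zero simple labelled by $\lambda$ and the characteristic-$p$ simple labelled by $\mu$ records the multiplicity with which $p$-torsion in the stalks of $\ic(\ov{\OC_\lambda}, \ZM)$ is detected along the stratum $\OC_\mu$. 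So Conjecture \ref{conj:KR} reduces to the geometric statement: the stalks of $\ic(\ov{\OC_\lambda}, \ZM)$ are $p$-torsion-free for every Dynkin $Q$, every $\nu$, every $\OC_\lambda$, and every prime $p$.

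To establish this torsion-freeness I would build integral parity covers for every orbit closure. In Dynkin type, Lusztig's partial-flag variety construction furnishes proper maps $\pi \colon \wt{X} \to \ov{\OC_\lambda}$ from smooth varieties, assembled from iterated $\PM^1$-bundles in the spirit of Bott-Samelson resolutions. Their pushforwards $\pi_! \uk_{\wt{X}}$ should be parity complexes in the sense of Juteau-Mautner-Williamson. Once parity vanishing is in hand, the JMW decomposition theorem implies that each $\pi_! \uk_{\wt{X}}$ is a direct sum of shifts of indecomposable parity sheaves; a rank count over $\QM$, where the BBD decomposition theorem is known and the summands are Lusztig's ICs, would force the mod-$p$ parity sheaves to coincide with $\ic(\ov{\OC_\lambda}, \FM_p)$, giving the torsion-freeness above. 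This is structurally the same strategy that succeeds for Schubert varieties in the finite flag variety under mild bounds on $p$, and it dovetails with the characteristic-zero story completed by Ariki.

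The essential obstacle is parity vanishing itself. In very small rank ($A_1, A_2, A_3$, and small $\nu$ elsewhere) one can verify it by hand because all orbit closures in $E_\nu$ are either smooth or rationally smooth with elementary transverse slices, so the Bott-Samelson-style pushforwards have no odd-degree stalk cohomology. The danger is that in general the orbits in $E_\nu$ include singularities that do not arise as classical Schubert singularities. The natural test case, and in my view the first place the argument can break, is the codimension-eight singularity identified by Kashiwara-Saito in type $A_5$: its reducible characteristic variety is a strong indication that the transverse slice is not a cone over anything whose mod-$2$ cohomology one can read off from a small resolution. Proving parity vanishing for the Lusztig pushforward onto this slice, at $p = 2$, is the step on which the entire strategy rests; either it goes through and the Bott-Samelson / JMW machine then delivers the conjecture uniformly in Dynkin type, or it does not, in which case one has located the precise singularity obstructing triviality of the decomposition numbers.
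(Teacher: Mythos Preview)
Your proposal is attempting to \emph{prove} a statement that the paper \emph{disproves}: Conjecture~\ref{conj:KR} is false, and the paper exhibits the Kashiwara--Saito singularity in type $A_5$ at $p=2$ as a counterexample (and, via Polo, gives counterexamples for every prime). So the proposal cannot succeed as written. That said, your geometric reformulation is exactly the one the paper uses (Theorem~\ref{thm:topologicalKR}): triviality of decomposition numbers is equivalent to torsion-freeness of stalks \emph{and costalks} of $\ic(\overline{X_\lambda},\ZM)$, and you have correctly singled out the Kashiwara--Saito stratum as the first place to look.

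The specific error is in the step ``a rank count over $\QM$ \dots\ would force the mod-$p$ parity sheaves to coincide with $\ic(\overline{\OC_\lambda},\FM_p)$''. Parity vanishing for Lusztig pushforwards in type $A$ is not the obstacle you think it is: it \emph{does} hold, by Maksimau's theorem (Theorem~\ref{thm:Maks}), so the pushforwards decompose into parity sheaves over any complete local ring. What does \emph{not} follow is that the indecomposable parity sheaf $\EC(\lambda,\FM_p)$ agrees with $\ic(\overline{X_\lambda},\FM_p)$. A comparison of graded ranks with the characteristic-zero picture only tells you that $\EC(\lambda,\OM)\otimes_{\OM}\KM$ contains $\ic(\overline{X_\lambda},\KM)$ as a summand; it can, and here does, contain further IC summands supported on smaller strata. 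Equivalently, the integral IC complex can have $p$-torsion in its (co)stalks even though the Lusztig sheaf is parity. For the pair $(\sigma,\pi)$ in the paper one has
\[
\EC(\sigma,\ZM_2)\otimes_{\ZM_2}\QM_2 \;\cong\; \ic(\overline{X_\sigma},\QM_2)\;\oplus\;\ic(\overline{X_\pi},\QM_2),
\]
reflecting $2$-torsion in the \emph{costalk} of $\ic(\overline{X_\sigma},\ZM)$ along $X_\pi$. Thus the dichotomy you pose in your last paragraph is resolved in the negative direction: your ``precise singularity obstructing triviality'' is indeed an obstruction, and the conjecture fails there. (Note also that your criterion should involve costalks as well as stalks; the torsion here appears in the costalk.)
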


The purpose of this note is to explain what this conjecture means
geometrically, and give a counterexample. Over a field of
characteristic zero it is a theorem of 
Varagnolo-Vasserot \cite{VV} and Rouquier \cite{R} that KLR algebras
are the Ext algebras of Lusztig sheaves on the moduli space of quiver
representations. Hence understanding projective modules over KLR
algebras is the same as understanding the decomposition of Lusztig
sheaves with coefficients of characteristic zero. The Decomposition
Theorem \cite{BBD} allows one
to translate this into a combinatorial problem. Maksimau \cite{M} has
recently extended this result: KLR algebras are the Ext algebras
of Lusztig sheaves over $\ZM$.  Here the decomposition theorem is
missing, but Kleshchev and Ram's conjecture is equivalent to the statement
that the decomposition of Lusztig sheaves should remain the same with
coefficients in any field.

This in turn is equivalent to asking that the stalks
and costalks of intersection cohomology complexes on moduli spaces of
quiver representations be free of torsion. (Or equivalently, that parity
sheaves (see \S \ref{sec:parity-sheav-quiv}) are isomorphic to intersection cohomology complexes.) A number
of people had hopes that similar statements would be true on flag
varieties in type $A$. In fact, the statement for moduli spaces of
quiver representations would imply the corresponding statement for
flag varieties, as we will see.\footnote{It is possible that these two
  statements are equivalent: see the two sentences after Problem 1 in \cite[\S 6.2]{KS}.}

In 2004 Braden gave counterexamples to these hopes on the flag
variety of $GL_8$ \cite{WB}. In this note we explain the relation between these
questions and give the first counterexample to Conjecture
\ref{conj:KR}. It occurs for the
Dynkin quiver of type $A_5$ with coefficients of characteristic 2. We
also explain how one can use recent results of Polo to give
counterexamples for type $A$ Dynkin quivers in all
characteristics. Unfortunately, these examples are large: to get a
counterexample in characteristic $p$ using the approach that we
discuss here one needs to consider a KLR
algebra associate to a Dykin quiver of type $A_{8p-1}$ and dimension
vector $(1, 2, \dots, 4p, \dots, 2,1)$. Hence the prospects of simple-minded
attempts to use KLR algebras to actually understand what is going on seem a little
dim! 

Finally, the first counterexample (on the $A_5$
quiver in characteristic $2$) is related to a certain subvariety in the moduli space of
quiver representations which was used by Kashiwara and Saito to give a
counterexample to a conjecture of Kazhdan and Lusztig on the
irreducibility of the characteristic cycle \cite{KS}. In fact, the examples that
we discuss in this paper all have reducible characteristic cycles, by
a result of Vilonen and the author \cite{VW}. Hence
Kleshchev and Ram's conjecture would have been implied by
Kazhdan and Lusztig's, had it been correct.

\emph{Acknowledgements:} I would like to thank Joe Chuang and Andrew
Mathas for explaining Kleshchev and Ram's conjecture to me,
Alexander Kleshchev, Ruslan Maksimau and Ulrich Thiel for useful
  discussions, and Carl Mautner for comments on a previous
  version. I am very grateful to Jon Brundan and Alexander
  Kleshchev for finally giving algebraic confirmation of the
  counterexample. Finally, thanks to the referee for a
  careful reading and useful comments.

\section{Brauer reciprocity for graded rings}

In the modular representation theory of finite groups an important
role is played by Brauer reciprocity. Brauer reciprocity is
an equivalence between the two problems of determining the composition
factors of the reduction modulo $p$ of a simple module in
characteristic 0, and decomposing
the lift of a projective module in characteristic $p$ to
characteristic 0. It will be important for us because it rephrases
the question of decomposition numbers for simple modules over KLR
algebras into questions about projective modules. As we will see,
the question of decomposing lifts of projective modules to
characteristic 0 has a
straightforward geometric interpretation.

\subsection{Graded algebras and modules}
\label{sec:gam}
Throughout we fix a prime number $p$ and let $\FM = \FM_p$, $\OM =
\ZM_p$ and $\KM = \QM_p$ (the finite field with $p$ elements, the
$p$-adic integers, and the $p$-adic numbers respectively). All of the
results of this paper remain valid for any 
$p$-modular system $(\KM,\OM,\FM)$.


Now let $H_\OM = \bigoplus_{i \in \ZM} H_i$ denote a finitely
generated graded $\OM$-algebra, such that $H^i$ is a finitely
generated free $\OM$-module for all $i \in \ZM$. We set $H_\FM := \FM \otimes_\OM H$
and $H_\KM := \KM \otimes_\OM H$. These are also finitely generated
graded algebras, finite dimensional in each degree. We assume in
addition that there exists a graded polynomial subring $A_\OM = \OM[X_1,
\dots, X_m] \subset H_\OM$  with generators of positive degree such
that:
\begin{enumerate}
\item $A_\OM$ is contained in the centre of $H_\OM$;
\item $A_\OM$ is a summand of $H_\OM$ as an $\OM$-module;
\item $H_\OM$ is a finitely generated module over $A_{\OM}$.
\end{enumerate}

For $k \in \{ \FM, \KM\}$ we set $A_k := A_{\OM} \otimes_\OM k$ which is
also a graded polynomial ring contained in the centre of $H_k$. Write
$A_k^+$ for the ideal of polynomials of positive degree.
Any graded simple $H_k$-module is annihilated by $A_k^+$ (by the graded
Nakayama lemma) and hence is a module over $H_k/(A_k^+)H_k$, a finite
dimensional graded algebra by our assumption (3). We conclude that
$H_k$ has finitely many graded simple modules up to shifts, all of
which are finite dimensional. 

For $k \in \{ \KM, \OM, \RM \}$ let $\Rep H_k$ denote the 
abelian category of all finitely generated graded
$H_k$-modules. That is, objects of 
$\Rep H_k$ are graded left $H_k$-modules, and morphisms are homogenous
of degree zero. Let $\Rep^f 
H_k$ denote the full subcategory of modules which are
finite dimensional over $k$, and let $\Proj H_k$ denote the full
subcategory of projective modules which are finitely generated over
$H_k$. Given $M = \bigoplus M^i \in \Rep H_k$ let $M(j)$ denote its shift, given
by $M(j)^i := M^{j+i}$. Given $M, N \in \Rep H_k$ set
$\Hom^\bullet(M,N) := \bigoplus \Hom(M,N(i))$. In this paper a 
\emph{graded category} will always mean a category equipped with a
self-equivalence (see the end of \cite[\S2.2.1]{2km} for a discussion).
We view $\Rep H_k$ as a graded category with respect
to the self-equivalence $M \mapsto M(1)$.

\subsection{Grothendieck groups}
 The Grothendieck groups of $\Rep^f H_k$ and $\Proj H_k$ will be
denoted by $[\Rep^f H_k]$ and $[\Proj H_k]$ respectively. They are
naturally $\ZM[v^{\pm 1}]$-modules by declaring $v [M]
:= [M(1)]$. Given $P \in \Proj H_k$ and $M \in \Rep^f H_k$ we obtain a pairing
$\langle -, - \rangle : [\Proj H_k] \times [\Rep^f H_k] \to \ZM[v^{\pm
  1}]$ by setting
\[
\langle [P], [M] \rangle := \gdim \Hom^\bullet(P,M),
\]
here $\gdim V := \dim V^iv^{-i} \in
\NM[v^{\pm 1}]$ for a finite dimensional graded vector space $V$.

It is known \cite[Corollary 9.6.7, Theorem 9.6.8]{NO} that every
simple module over
$H_k/(A_k^+)H_k$ admits a grading,\footnote{Here is an intuitive
  explanation of this fact, which was explained to me by Ivan Losev.
 Let $C$ be a finite-dimensional graded algebra. The grading on $C$ is
 equivalent to an action of the multiplicative group on $C$. A module is gradable if and only if its twist by any element
 of the multiplicative group yields an isomorphic module. Now twisting
 preserves simple modules and their moduli are
 discrete (because $C$ is finite-dimensional). Hence they are fixed, and hence gradable.
} and this grading is unique up to
isomorphism and shifts. By the Krull-Schmidt property every graded
simple modules $L$ admits a projective cover $P_L$. If we fix a choice
$L_1, \dots, L_m$ of graded simple module and let $P_1, \dots, P_m$
be their projective covers then the classes $[L_1], \dots, [L_m]$
(resp. $[P_1], \dots, [P_m]$ give a free $\ZM[v^{\pm 1}]$-basis for
$[\Rep H_k]$ (resp. $[\Proj H_k]$). Moreover, these bases are dual
under $\langle -, - \rangle$.

\subsection{Decomposition maps}
Given any $M \in \Rep^f H_\KM$ one can always find an
$H_\OM$-stable lattice $M_\OM \subset M$ by applying $H_{\OM}$ to a set of
generators for $M$. One can argue as in \cite[Part III]{S} that the class $[\FM
\otimes_\OM M_\OM]$ in the Grothendieck group of $\Rep^f H_{\FM}$
does not depend on the choice of lattice. In this way one obtains the
decomposition map between the Grothendieck groups:
\[
d : [\Rep^f(H_\KM)] \to [\Rep^f(H_\FM)].
\]
One the other hand, standard arguments (e.g. \cite[Theorem 12.3]{F})
show that idempotents in $H_\FM$
lift to $H_\OM$. It follows that given any projective module $P_\FM$
over $H_\FM$ there exists a projective module $P_\OM$ over $H_\OM$ such that $\FM
\otimes_\OM P_\OM \cong P_\FM$. Moreover, $P_\OM$ is unique up to (non-unique)
isomorphism by Nakayama's lemma. This process gives us the extension map:
\begin{gather*}
e : [\Proj(H_\FM)] \to [\Proj(H_\KM)] \\
[P_\FM] \mapsto [\KM \otimes_\OM P_\OM]
\end{gather*}

\subsection{Brauer reciprocity}
One way of phrasing Brauer reciprocity is that $d$ and $e$ are adjoint
with respect to the canonical pairing (the proof is identical to
\cite[Part III]{S}):

\begin{lem} \label{lem:BR}
We have
\[
\langle e([P]), [L] \rangle = \langle [P], d([L]) \rangle.
\]
for all $P \in \Proj H_\FM$ and $L \in \Rep^f H_\KM$.
\end{lem}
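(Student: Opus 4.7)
The plan is to reduce both sides of the desired equality to the graded rank of a common $\OM$-lattice of homomorphisms, exploiting projectivity to base-change $\Hom$ freely.

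First I fix integral lifts. Given $P \in \Proj H_\FM$, the hypotheses on $H_\OM$ (idempotent lifting, as cited from \cite[Theorem 12.3]{F}) produce a finitely generated projective graded $H_\OM$-module $P_\OM$ with $\FM \otimes_\OM P_\OM \cong P$, and $[\KM \otimes_\OM P_\OM]$ represents $e([P])$ by definition. Given $L \in \Rep^f H_\KM$, I pick an $H_\OM$-stable lattice $L_\OM \subset L$ as in the text; then $d([L]) = [\FM \otimes_\OM L_\OM]$. Because $L$ is finite-dimensional over $\KM$ and $\OM$ is a PID, each graded piece $L_\OM^i$ is a finitely generated free $\OM$-module, and $L_\OM^i = 0$ for all but finitely many $i$.

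Next I analyze $N := \Hom^\bullet_{H_\OM}(P_\OM, L_\OM) = \bigoplus_{n\in\ZM} N^n$. Since $H_\OM$ is finite over the central polynomial ring $A_\OM$, the module $P_\OM$ is finitely generated over $A_\OM$; choosing homogeneous generators of degrees $d_1, \dots, d_r$, any degree-$n$ homomorphism is determined by its values on these generators, so
\[
N^n \hookrightarrow \bigoplus_{i=1}^r L_\OM^{d_i+n}.
\]
The right-hand side is a finitely generated free $\OM$-module (and vanishes for $|n|$ large), so $N^n$ is a finitely generated free $\OM$-module as well.

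The heart of the argument is then base change. Because $P_\OM$ is a finitely generated projective $H_\OM$-module, for any $\OM$-algebra $k$ and any $H_\OM$-module $M$ the canonical map
\[
\Hom_{H_\OM}(P_\OM, M) \otimes_\OM k \longto \Hom_{H_k}(k \otimes_\OM P_\OM, k \otimes_\OM M)
\]
is an isomorphism (immediate for $P_\OM = H_\OM$, then for free modules of finite rank, then for summands thereof). Applying this in each graded degree to $M = L_\OM$ with $k = \KM$ and $k = \FM$ gives graded isomorphisms
\[
\KM \otimes_\OM N \;\simto\; \Hom^\bullet_{H_\KM}(\KM \otimes_\OM P_\OM,\, L),
\]
\[
\FM \otimes_\OM N \;\simto\; \Hom^\bullet_{H_\FM}(P,\, \FM \otimes_\OM L_\OM).
\]
Since $N$ is degreewise finitely generated free over $\OM$, its graded rank equals both graded dimensions on the right: $\gdim_\KM(\KM \otimes_\OM N) = \gdim_\FM(\FM \otimes_\OM N)$. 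Translating back,
\[
\langle e([P]), [L] \rangle = \gdim \KM \otimes_\OM N = \gdim \FM \otimes_\OM N = \langle [P], d([L]) \rangle.
\]

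I do not foresee a hard step: the only subtlety is checking finite generation/freeness of $N^n$ over $\OM$, which follows cleanly from the central polynomial subring assumption (3) together with $\OM$ being a PID. The decomposition map $d$ is already known to be well-defined by the reference to \cite[Part III]{S} in the text, so the remaining bookkeeping is the standard Brauer reciprocity argument transplanted to the graded $p$-modular setting.
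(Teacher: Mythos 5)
Your proof is correct and follows the same route the paper implicitly takes when it declares the argument ``identical to \cite[Part III]{S}'': lift $P$ to a projective $H_\OM$-module, form the graded $\OM$-module $N = \Hom^\bullet_{H_\OM}(P_\OM, L_\OM)$, use projectivity to base change $\Hom$ to $\KM$ and $\FM$, and compare graded ranks. The extra care you take in verifying that each $N^n$ is finitely generated free over $\OM$ (via the central polynomial subring hypothesis and $\OM$ being a PID) is exactly the bookkeeping needed to transplant Serre's Brauer reciprocity argument to the graded $p$-modular setting, so the proposal matches the intended proof.
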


Equivalently the matrices (with entries in $\ZM[v^{\pm 1}]$) for $d$ and $e$ in the basis
$\{ [L_i] \}_{i = 1}^m$ and $\{ [P_i] \}_{i =1}^m$ are transposes of one another.

\section{Quiver varieties and parity sheaves}

In this section we recall briefly the definition of moduli spaces of quiver
representations and explain why it makes sense to study parity sheaves
on these spaces. Using a result of Maksimau \cite{M} we then explain why describing the characters of
parity sheaves on these spaces is equivalent to describing the
indecomposable projective modules over Khovanov-Lauda-Rouquier
algebras.

With the (possible) exception of \S\ref{sec:parity-sheav-quiv} and
\S\ref{sec:parity-sheav-kleshch} the material in this section is
standard. The material concerning constructible sheaves on moduli of
quiver representations is due to Lusztig \cite{L1,L2} (see \cite{Sch}
for an excellent survey). For the relation to KLR algebras see
\cite{VV,R,M}.

\subsection{Moduli of quiver representations}

Let $\G$ denote a quiver with vertex set $I$. Recall that a
representation of $\G$ is an $I$-graded vector space $V = \oplus V_i$  together with
linear maps $V_i \to V_j$ for each arrow $i \to j$ of $Q$. Let $\Rep Q$ denote
the abelian category of complex representations of $\G$. A \emph{dimension
  vector} is an element of the monoid $\NM[I]$ of formal $\NM$-linear
combination of the elements of $I$. Given an $I$-graded vector space $V$ its dimension vector is
$\gdim {V} = \sum (\dim V_i) i \in \NM[I]$.

%


Fix a dimension vector $d = \sum d_i i \in \NM[I]$ and a complex
 $I$-graded vector space $V$ with $\dim V = d$. Consider the space
\[
E_V := \prod_{ i \to j} \Hom(V_i, V_j)
\]
where the product is over all arrows $i \to j$ in $Q$. Then
$E_V$ is the space of all representations of the quiver $Q$ of
dimension vector $d$ together with fixed isomorphism with $V_i$ at
each vertex $i \in I$. If we let
\[
G_V := \prod_{i \in I} GL(V_i)
\]
be the group of grading preserving automorphisms of $V$ 
then $G_V$ acts on $E_V$ by $(g_i) \cdot (\alpha_{i\to j}) := (g_j
\alpha_{i\to j} g_i^{-1})$. The points of the quotient space
$G_d \setminus E_V$ correspond to isomorphism classes of
representations of $Q$ of dimension vector $d$.


\subsection{Flags and proper maps}
Let us fix a dimension vector $d
\in \NM[I]$ and a sequence ${\bf i} = (i_1, \dots, i_m) \in I^m$ such that $\sum i_j = d$. Given a
representation $W$ with $\gdim W = d$, a \emph{flag on $W$ of type
  ${\bf i}$} is a flag
\[
W^\bullet = (0 \subset W^1 \subset \dots W^m = W)
\]
of subrepresentations of $W$ such that $W^j / W^{j-1}$ is isomorphic
to the simple representation concentrated at the vertex $i_j$ with all
arrows identically zero, for all
$1 \le j \le m$. Consider the space
\[
E_V ({\bf i}) := \{ (W^\bullet, W) 
\; | \; W \in E_V \text{ and } W^\bullet \text{ is a flag on $W$ of type ${\bf i}$} \}.
\]
There is a natural embedding of $E_V ({\bf i})$ into a product
of $E_V$ and a product of partial flag varieties, and so
$E_V({\bf i})$ has the structure of a complex algebraic
variety. One may define a $G_V$-action on $E_V({\bf i})$
via $g \cdot (W^\bullet, W) := (gW^{\bullet}, g\cdot W)$. Consider the natural projection
\[
\pi_{\bf i} : E_V({\bf i}) \to E_V.
\]
Then $\pi$ is clearly proper and $G_V$-equivariant. It is also not
too difficult to see that $E_V({\bf i},{\bf a})$ is smooth.

\begin{Eg}
  If $\Gamma$ consists of a single vertex $i$ and a single loop and
  ${\bf i} = (i,i,\dots ,i)$ then the image of
  \[\pi_{\bf i} : E_V({ \bf i}) \to E_V\] consists of nilpotent
  endomorphisms and $\pi_{\bf i}$ coincides with the Springer resolution of the
  nilpotent cone in $E_V = \End(V)$.
\end{Eg}

\subsection{Constructible sheaves and KLR algebras} Fix a dimension
vector $d$ and an $I$-graded vector space $V$ with $\gdim V = d$.  In what follows we
wish to consider constructible sheaves on $G_V \setminus
E_V$. However, as this space is usually poorly behaved, we instead
consider the 
$G_V$-equivariant geometry of the space $E_V$. Alternatively we could (and
probably should) consider the quotient stack $[G_V \setminus E_V]$  as in the elegant
treatment of Rouquier \cite{R}. The difference is
essentially aesthetic.

Let us fix a commutative ring of coefficients $\Bbbk$ and consider $D_d :=
D^b_{G_V}(E_V; \Bbbk)$ the $G_V$-equivariant bounded  constructible derived category of
$E_V$ with coefficients in $\Bbbk$ \cite{BL}.

Set $\Seq(d) := \{ (i_1, \dots, i_\Bbbk) \in I^k \; | \;
\sum i_j = d\}$. Given any ${\bf i} \in \Seq(d)$ we can consider the
proper map
\[
\pi_{\bf i} : E_V({\bf i}) \to E_V.
\]
defined in the previous subsection. The direct image
$\LC_{{\bf i}, \Bbbk} := \pi_{{\bf i}!}\un{\Bbbk}$ of the equivariant constant
sheaf on $E_V({\bf i})$ in $D^b_{G_V}(E_V({\bf i}), \Bbbk)$ is
called a \emph{Lusztig sheaf}. We set
\[
\LC_{V,\Bbbk} := \bigoplus_{{\bf i} \in \Seq(d)} \LC_{{\bf i}, \Bbbk} \in D_d.
\]

To $d$ we may also associate a Khovanov-Lauda-Rouquier algebra
$R(d)$ (see \cite{KL,R}). It is a graded algebra with idempotents $e(
{\bf i} )$ corresponding to each ${\bf
  i} \in \Seq(d)$. It is free over $\ZM$ and we denote by $R(d)_\Bbbk$ the
algebra obtained by extension of scalars to our ring $\Bbbk$.

The following result explains the relation between
Khovanov-Lauda-Rouquier algebras and the geometry of the moduli space
of quiver representations:

\begin{thm} \label{thm:KLRiso}
One has an isomorphism of graded rings
\[
R(d)_\Bbbk \cong \Ext^\bullet_{D_d}(\LC_{V,\Bbbk}).
\]
Under this isomorphism $e({ \bf i})$ is mapped to the
projection to $\LC_{{\bf i}, \Bbbk}$.
\end{thm}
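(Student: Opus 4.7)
The plan is to realize the right-hand side as a convolution algebra on a Steinberg-type variety and match it with the KLR presentation generator by generator. Since each $E_V({\bf i})$ is smooth and $\pi_{\bf i}$ is proper, base change and Verdier duality give a natural isomorphism
\[
\Ext^\bullet_{D_d}(\LC_{{\bf i},\Bbbk},\LC_{{\bf j},\Bbbk}) \cong H^{BM,G_V}_*(Z_{{\bf i},{\bf j}})[\text{shift}],
\]
where $Z_{{\bf i},{\bf j}} := E_V({\bf i}) \times_{E_V} E_V({\bf j})$, and sending Yoneda product to the standard convolution product on equivariant Borel--Moore homology of the double fibre product. Summing over ${\bf i},{\bf j} \in \Seq(d)$ endows $\bigoplus H^{BM,G_V}_*(Z_{{\bf i},{\bf j}})$ with a graded $\Bbbk$-algebra structure in which the projection to the $({\bf i},{\bf i})$-summand corresponds to the class of the diagonal $\Delta E_V({\bf i}) \subset Z_{{\bf i},{\bf i}}$.

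Next, I would define the image of each KLR generator in this convolution algebra. The idempotent $e({\bf i})$ is sent to $[\Delta E_V({\bf i})]$; the polynomial generator $x_k({\bf i})$ is sent to the first Chern class of the tautological line bundle $W^k/W^{k-1}$ on $E_V({\bf i})$, viewed as a self-correspondence via the diagonal; and the crossing generator $\psi_k({\bf i})$ is built from the fundamental class of the smooth correspondence $Z_{{\bf i},s_k{\bf i}}^{\circ} \subset Z_{{\bf i},s_k{\bf i}}$ parameterising pairs of flags of types ${\bf i}$ and $s_k{\bf i}$ that agree outside step $k$, corrected when $i_k = i_{k+1}$ by an explicit Chern-class term so as to land in the correct equivariant class.

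Then I would verify that these images satisfy the KLR defining relations. The idempotent and polynomial commutation relations are immediate. The straight braid relations and the mixed $x\psi$-relations reduce to the rank-two geometry of $E_V({\bf i})$ when only two consecutive indices are affected. The quadratic relation $\psi_k^2 = Q_{i_k,i_{k+1}}(x_k,x_{k+1})$ is the delicate point: it is a local self-intersection computation for the correspondence $Z_{{\bf i},s_k{\bf i}}^{\circ}$ near the diagonal, where the normal bundle Euler class produces exactly the KLR polynomial $Q_{ij}$ determined by the number of arrows between $i$ and $j$ in the quiver. Here the definition of $Q_{ij}$ is set up so that the computation works integrally.

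Finally, I would show bijectivity. One stratifies $Z_{{\bf i},{\bf j}}$ by orbits of pairs of flags, indexed by minimal-length double-coset representatives in the symmetric group; each stratum is an affine bundle over a smooth base, so equivariant Borel--Moore homology is a free $H^*_{G_V}(\pt)$-module with a basis indexed by these representatives. On the algebraic side, a PBW-type basis of $e({\bf j})R(d)_\Bbbk e({\bf i})$ over $H^*_{G_V}(\pt)$ has the same rank, and an upper-triangularity argument with respect to the Bruhat-style order on strata shows that the KLR basis maps to a basis of the convolution algebra. The main obstacle is the quadratic relation in the case $i_k = i_{k+1}$, where the sign conventions and the choice of orientation of the quiver enter subtly; a secondary obstacle is ensuring freeness over $\Bbbk = \OM$ (Maksimau's refinement of Varagnolo--Vasserot--Rouquier), which requires that the stratum-by-stratum basis be genuinely free integrally rather than only after inverting torsion.
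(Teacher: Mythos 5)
The paper does not prove this theorem: the proof given is a one-line citation to Varagnolo--Vasserot, Rouquier (characteristic zero), and Maksimau (arbitrary ring). Your proposal, by contrast, sketches the actual argument that those references carry out, so there is no meaningful "same vs.\ different approach" comparison to make against the paper itself.

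That said, your sketch is an accurate roadmap of how the cited results are proved, and the emphasis is in the right places. The identification of $\Ext^\bullet_{D_d}(\LC_{V,\Bbbk})$ with an equivariant Borel--Moore convolution algebra on the Steinberg-type variety $Z = \bigsqcup Z_{{\bf i},{\bf j}}$ is the standard first step (using properness of $\pi_{\bf i}$, smoothness of $E_V({\bf i})$, base change, and duality). Sending $e({\bf i})$ to the diagonal class, $x_k$ to a Chern class of the tautological subquotient, and $\psi_k$ to the class of the correspondence changing step $k$, then checking the KLR relations by local intersection-theoretic computations, is exactly the Varagnolo--Vasserot mechanism; the quadratic relation $\psi_k^2 = Q_{i_k,i_{k+1}}(x_k,x_{k+1})$ being an Euler-class computation is the one delicate point and you flagged it correctly. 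Your final point --- that the affine-bundle stratification of $Z_{{\bf i},{\bf j}}$ must yield a \emph{free} module over $H^*_{G_V}(\pt;\Bbbk)$ integrally, not just rationally, in order to run the rank-comparison with the PBW basis over $\OM$ --- is precisely the refinement Maksimau's Theorem 2.5 supplies, and is the content the present paper actually needs to invoke the theorem with $\Bbbk = \OM$ or $\FM$. So your proposal is a correct outline of the proof that the paper delegates to the literature; turning it into a complete proof would of course require filling in the convolution formalism and the explicit orientation/sign bookkeeping in the $i_k = i_{k+1}$ case, which is where most of the technical work in those references lies.
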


\begin{proof}
If $\Bbbk$ is a field of characteristic zero this is proved in
\cite{VV} and \cite[\S~5]{R}. For an arbitrary ring $\Bbbk$ this is
\cite[Theorem 2.5]{M}.
\end{proof}

\subsection{Lusztig sheaves and projective modules}
\label{sec:luszt-sheav-proj}

It is well-known and easily proved that given an object $X$ in a
Karoubian additive category then $\Hom(-,X)$ gives an
equivalence 
\[
\langle X \rangle_{\oplus}^{op} \simto \proj E
\]
where $E := \End(X)$, $\langle X \rangle^{op}_\oplus$ denotes the
opposite category of the full additive Karoubian
subcategory generated by $X$, and $\proj$ denotes the category of
finitely generated projective modules over $E$ (see e.g. \cite[\S
~1.5]{Kr}).

One can extend this observation to graded categories (i.e. categories
equipped with a self-equivalence $M \mapsto TM$). If $X$ is an object
in a graded Karoubian additive category then $\bigoplus_{m \in
  \ZM} \Hom(-, T^mX)$ gives an equivalence of graded categories
\[
\langle X \rangle_{T, \oplus}^{op} \simto \Proj E
\]
where $\langle X \rangle^{op}_{T, \oplus}$ denotes the
opposite category of the full additive Karoubian
subcategory generated by $T^mX$ for all $m \in \ZM$, and $E$ denotes
$\bigoplus_{m \in \ZM} \Hom(X, T^mX)$, naturally a graded
algebra\footnote{given $f : X \to T^mX$ and $g : X \to T^nX$ their product is
given by $T^nf \circ g : X \to T^{m+n}X$.} and $\Proj E$ denotes the
graded category of finitely generated projective modules over $E$
(viewed as a graded category with self-equivalence $M \mapsto M(1)$). 

Let us forget the triangulated structure on $D_d$ and view it simply
as a graded additive category, with self-equivalence given by $\FC
\mapsto \FC[1]$. If we apply the above observations together with Theorem
\ref{thm:KLRiso} (with $V$ and $d$ as in the previous section) we see
that $\Hom^\bullet(-, \LC_{V,\Bbbk})$ yields an equivalence
\[
\langle \LC_{V,\Bbbk} \rangle^{op}_{[1], \oplus} \simto \Proj R(d)_{\Bbbk}.
\]
Moreover, it follows from Maksimau's proof of Theorem \ref{thm:KLRiso}
that the above equivalence is compatible with extension of
scalars.

\subsection{Moduli of representations of Dynkin quivers}
\label{sec:moduli-repr-dynk}
We say that $Q$ is a \emph{Dynkin quiver} if the graph underlying $Q$ is a
simply-laced Dynkin diagram. If $Q$ is a Dynkin quiver we identify $I$ with
the simple roots of the corresponding simply laced root system and
write $R^+ \subset \NM [I]$ for the positive roots. Recall Gabriel's
theorem (see e.g. \cite{B}): a quiver has finitely many indecomposable
representations if and only if the underlying graph of $Q$ is a
Dynkin diagram, in which case we have a bijection:
\[
\gdim: \left \{ \begin{array}{c} \text{indececomposable}\\ \text{representations of
      $Q$} \end{array} \right \}_{/ \cong}
\simto R^+.
\]
Given a positive root $\a \in R^+$ we denote by $I_\a$ the
corresponding indecomposable representation, which is well-defined up
to isomorphism.

Recall that orbits of $G_V$ on $E_V$ correspond to isomorphism
classes of representations of $Q$. Hence $G_V$ has finitely many
orbits on $E_V$ if and only if there are finitely many
isomorphism classes of representations of $Q$ with dimension vector
$d$. By Gabriel's theorem, this is the case for all dimension vectors if
and only if $\Gamma$ is a Dynkin quiver.

From now on let us assume that $Q$ is a Dynkin quiver and fix a
dimension vector $d$. By
Gabriel's theorem and the Krull-Schmidt theorem we see that
$G_V$-orbits on $E_V$ are classified by tuples:
\[
\Lambda_d := \{ \l =  (\l_\a)_{\a \in R^+} \; | \; \sum \l_\a \a = d \}
\]
($\l_\a$ gives the multiplicity of the indecomposable representation
$I_\a$ as a direct summand of the isomorphism class of
representation). Rephrasing this we have a stratification of $E_V$ by $G_V$-orbits
\[
E_V = \bigsqcup_{\l \in \Lambda_d } X_\l
\]
where
$X_\l := \{ W \in E_V \; | \; W
\cong \bigoplus_{\a \in R^+} I_\a^{\oplus \l_\a} \text{ in $\Rep Q$} \}.$

\subsection{Parity sheaves on quiver moduli}
\label{sec:parity-sheav-quiv}

We now recall the notion of a parity complex and sheaf 
\cite{JMW}. As above we assume that $Q$ is a Dynkin quiver and fix the stratification
\[
E_V = \bigsqcup_{\l \in \L_d} X_\l
\]
of $E_V$ by $G_V$-orbits. Given any $\l \in \Lambda_d$ we denote
by $i_\l : X_\l \into E_V$ the inclusion of $X_\l$. Let $? \in \{
!, * \}$. We say that a complex  $\FC \in D_d$ is \emph{$?$-even}
if the cohomology sheaves of $i_\l^? \FC$ are local systems of free
$\Bbbk$-modules which vanish in odd degrees. We say that a complex
$\FC \in D_d$ is \emph{even} if it is both $*$- and $!$-even. We say
that $\FC$ is \emph{parity} if it admits a decomposition
$\FC \cong \FC_0 \oplus \FC_1$ with both $\FC_0$ and $\FC[1]$
even. A complex $\FC$ is a \emph{parity sheaf} if it is parity,
self-dual and indecomposable.

Now suppose that $\Bbbk$ is a complete local ring, so that $D_d$ is a
Krull-Schmidt category (see \cite[\S~2.1]{JMW}). We have the following
classification result:

\begin{thm}
  For all $\l \in \L_d$ there exists up to isomorphism at most one
  parity sheaf $\EC(\l, \Bbbk)$ with $\supp \EC(\l, \Bbbk) = \ov{X_\l}$.
\end{thm}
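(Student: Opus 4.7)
The plan is to prove uniqueness by the standard parity-vanishing mechanism from \cite{JMW}. Suppose $\EC$ and $\EC'$ are two parity sheaves with support $\ov{X_\l}$; using the decomposition $\EC = \EC_0 \oplus \EC_1$ in the definition together with indecomposability, after a common shift we may assume both are $*$- and $!$-even. The argument proceeds in two steps: first pin down the restrictions $i_\l^*\EC$ and $i_\l^*\EC'$ to the open stratum $X_\l$; then lift the canonical isomorphism between them to an isomorphism over all of $\ov{X_\l}$.

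For the first step, note that the stabilizer in $G_V$ of a point of $X_\l$ is $\prod_\a GL(\l_\a)$ (by Gabriel's theorem and $\End_Q(I_\a) = \CM$), hence connected. So every $G_V$-equivariant local system on $X_\l$ is constant; combined with evenness and self-duality, $i_\l^*\EC$ and $i_\l^*\EC'$ both come out to be $\un\Bbbk_{X_\l}$ in a single common degree, with indecomposability ruling out higher rank or several degrees at once. This yields a canonical isomorphism $i_\l^*\EC \simto i_\l^*\EC'$ up to scalar.

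The technical core is a parity-vanishing lemma: for parity complexes $\FC, \GC \in D_d$ with $\FC$ $*$-even and $\GC$ $!$-odd (i.e.\ $\GC[1]$ is $!$-even), one has $\Hom_{D_d}(\FC, \GC) = 0$. This is proved by devissage on $\supp \FC$, using the recollement triangle $j_! j^* \FC \to \FC \to i_* i^* \FC \to j_! j^* \FC[1]$ for an open dense stratum inside $\supp\FC$, to reduce to the case $\FC = (i_\mu)_! \un\Bbbk_{X_\mu}[n]$ with $n$ even; adjunction then rewrites $\Hom(\FC, \GC)$ as $H^{-n}_{G_V}(X_\mu; i_\mu^!\GC)$, which vanishes because $i_\mu^!\GC$ is concentrated in odd degrees while $H^\bullet_{G_V}(X_\mu; -)$ with coefficients in an equivariant local system (necessarily constant) is concentrated in even degrees --- the classifying space of $\prod_\a GL(\l_\a)$ has polynomial cohomology in even degrees only.

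Given the lemma, apply $\Hom(-, \EC')$ to the recollement triangle $j_! j^* \EC \to \EC \to i_* i^* \EC \to j_! j^*\EC[1]$ for $j: X_\l \into \ov{X_\l}$: the obstruction $\Hom(i^*\EC, i^!\EC'[1])$ vanishes by parity, so $\Hom(\EC, \EC') \to \Hom(j^*\EC, j^*\EC')$ is surjective, and the canonical isomorphism on the open stratum lifts to $\psi : \EC \to \EC'$. Reversing roles gives $\phi : \EC' \to \EC$. Since $\End(\EC)$ is a local ring (Krull--Schmidt plus indecomposability) and $\phi\psi - \id$ restricts to zero on $X_\l$, it lies in the maximal ideal, so $\phi\psi$ is a unit and $\psi$ a split monomorphism; indecomposability of $\EC'$ forces $\psi$ to be an isomorphism. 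The main obstacle is the parity-vanishing step --- everything else is formal --- and it hinges on the crucial fact that $G_V$-stabilizers on the strata are products of $GL$'s, whose classifying spaces have cohomology concentrated in even degrees.
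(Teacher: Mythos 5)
Your argument follows essentially the same route as the paper: where the paper simply invokes the general uniqueness theorem for parity sheaves from Juteau--Mautner--Williamson and cites Maksimau for the two needed hypotheses (that the equivariant cohomology of the strata is torsion-free and concentrated in even degrees, and that all $G_V$-equivariant local systems on the orbits are constant), you reprove the JMW parity-vanishing lemma inline and verify those hypotheses directly. One small inaccuracy worth flagging: the stabilizer in $G_V$ of a point of $X_\l$ is the unit group of the finite-dimensional algebra $\End_Q\bigl(\bigoplus_\a I_\a^{\oplus \l_\a}\bigr)$, which is an extension of $\prod_\a GL(\l_\a)$ by a unipotent group rather than $\prod_\a GL(\l_\a)$ on the nose --- but since the unipotent radical does not change the homotopy type of the classifying space, your conclusions (connectedness, hence constant local systems; even, torsion-free cohomology of $B\Stab$) still hold.
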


\begin{proof}
  The theorem is immediate from the theory of parity sheaves, once
  we have established that our stratified $G_V$-variety $E_V$
  satisfies \cite[(2.1) and (2.2)]{JMW} and that all $G_V$-equivariant
  local systems on $X_\l$ are constant. Both these statements hold by 
\cite[Corollary 2.11]{M} and \cite[Lemma 3.6]{M}.\end{proof}

For a general Dynkin quiver one does not know if Lusztig sheaves are
parity (see \cite[Conjecture 1.3]{M}). The problem is that one does
not 
know of the fibres of the maps $\pi_{\bf i}$ have vanishing odd
cohomology. If $Q$ is of type $A$ then this has been established by
Maksimau (see \cite[Corollary 3.36]{M}):

\begin{thm}  \label{thm:Maks}
Suppose that $Q$ is of type $A$. Then for all $\l \in
  \Lambda_d$ there exists a parity sheaf $\EC(\l, \Bbbk)$ with $\supp \EC(\l, \Bbbk)
  = \ov{X_\l}$. Moreover, $\LC_{V,\Bbbk}$ is parity and for all $\l \in \L_d$,
  a shift of $\EC(\l, \Bbbk)$ occurs as a direct summand of $\LC_{V,\Bbbk}$. 
\end{thm}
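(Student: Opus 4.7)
The plan is to deduce all three assertions from Maksimau's Corollary 3.36, which in type $A$ says that the fibres of the maps $\pi_{\bf i}$ have cohomology free over $\Bbbk$ and concentrated in even degrees.

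First I would show $\LC_{V,\Bbbk}$ is parity by showing each $\LC_{{\bf i},\Bbbk} = \pi_{{\bf i}!}\un{\Bbbk}$ is. Proper base change identifies the stalks $i_x^* \pi_{{\bf i}!}\un{\Bbbk}$ with $R\G(\pi_{\bf i}^{-1}(x);\un{\Bbbk})$, which by Maksimau is free and even. Since $E_V({\bf i})$ is smooth and $\pi_{\bf i}$ is proper, the complex $\pi_{{\bf i}!}\un{\Bbbk}[\dim E_V({\bf i})]$ is Verdier self-dual, so costalks satisfy the analogous vanishing. The local systems on the orbits $X_\l$ are constant by $G_V$-equivariance (\cite[Lemma~3.6]{M}), establishing parity. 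Since $\Bbbk$ is complete local, $D_d$ is Krull--Schmidt, so $\LC_{V,\Bbbk}$ decomposes into a finite sum of indecomposable parity complexes, each of the form $\EC(\mu,\Bbbk)[n]$ by the classification of parity sheaves in the preceding theorem. This already yields the existence of $\EC(\mu,\Bbbk)$ for every $\mu$ appearing in the decomposition.

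It remains to exhibit, for each $\l \in \L_d$, a summand of $\LC_{V,\Bbbk}$ supported on $\ov{X_\l}$. For this I would construct a sequence ${\bf i}_\l$ adapted to $\l$ so that $\pi_{{\bf i}_\l}$ is birational onto $\ov{X_\l}$. Fix an ordering $\a_1,\dots,\a_N$ of $R^+$ with $\Hom_Q(I_{\a_j},I_{\a_k})=0$ for $j>k$ (an Auslander--Reiten, or convex, ordering), and let ${\bf i}_\l$ be the concatenation, in increasing order of $j$, of any fixed composition series of $I_{\a_j}^{\oplus \l_{\a_j}}$. A flag of type ${\bf i}_\l$ on a generic $W \in X_\l$ is forced to pick off successive AR-tails in $W$, and the $\Hom$-free ordering together with the thinness of indecomposables in type $A$ (each $I_\a$ has all dimension-vector entries $\leq 1$) makes this choice unique. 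Hence $\pi_{{\bf i}_\l}$ is birational onto $\ov{X_\l}$, and the contribution of the open stratum provides a shift of $\EC(\l,\Bbbk)$ as a direct summand of $\LC_{{\bf i}_\l,\Bbbk}$, and therefore of $\LC_{V,\Bbbk}$.

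The main obstacle is this last fibre calculation. A $\Hom$-free ordering exists in every Dynkin type, so the genuinely type-$A$ ingredient here is the rigidity of the flag: in types $D$ and $E$ one can have indecomposables of dimension $\geq 2$ at some vertex, which introduces positive-dimensional Grassmannian-type fibres and can destroy birationality of $\pi_{{\bf i}_\l}$. Thus type $A$ enters twice: once in Maksimau's cohomology vanishing, and once in the rigidity of generic flags that makes $\pi_{{\bf i}_\l}$ a resolution of $\ov{X_\l}$.
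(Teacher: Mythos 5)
The paper does not prove this theorem; it is a citation of Maksimau's Corollary~3.36 in [M]. Your first paragraph (proper base change $+$ Maksimau's even torsion-free fibre cohomology gives $*$-evenness, smoothness and properness give Verdier self-duality and hence $!$-evenness, and constancy of equivariant local systems on orbits then gives parity) is correct and is essentially the content of the cited result, so I will focus on the second step.

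The construction of ${\bf i}_\l$ has an orientation error that makes the key assertion false as stated. With $\Hom_Q(I_{\a_j},I_{\a_k})=0$ for $j>k$ (the AR ordering) and concatenation in \emph{increasing} $j$, the flag is asked to begin with a subrepresentation $\cong I_{\a_1}^{\oplus\l_{\a_1}}$; for this to be canonical inside $W=\bigoplus I_{\a_j}^{\oplus\l_{\a_j}}$ one needs no \emph{outgoing} maps, i.e.\ $\Hom(I_{\a_1},I_{\a_j})=0$ for $j>1$, which is the opposite condition. Concretely, take $Q=(1\to 2)$ and $\l$ the semisimple representation $S_1\oplus S_2$. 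The AR order is $S_2,\ I_{\a_1+\a_2},\ S_1$, your recipe gives ${\bf i}_\l=(2,1)$, and $\pi_{(2,1)}$ is an isomorphism onto \emph{all} of $E_V$; so $\LC_{(2,1),\Bbbk}$ is the shifted constant sheaf on $E_V$ and no shift of $\EC(\l,\Bbbk)$ occurs in it. Concatenating in \emph{decreasing} $j$ gives ${\bf i}_\l=(1,2)$, which forces the arrow to vanish and has image $\ov{X_\l}=\{0\}$ as required. Beyond this sign error there are two further gaps. First, $\pi_{{\bf i}_\l}$ is not birational once some $\l_\a\geq 2$: a composition series of $I_\a^{\oplus\l_\a}$ is a positive-dimensional choice even in type $A$. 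The conclusion survives, since it suffices that $\pi_{{\bf i}_\l}^{-1}(X_\l)$ be nonempty with connected fibres and that $\im\pi_{{\bf i}_\l}\subseteq\ov{X_\l}$, but your appeal to birationality via ``thinness'' of the indecomposables as the type-$A$ ingredient is therefore off-target. Second, and most seriously, you assert but never establish the inclusion $\im\pi_{{\bf i}_\l}\subseteq\ov{X_\l}$; this is exactly the non-trivial point --- without it one only produces a parity summand supported on some possibly larger orbit closure, as the $(2,1)$ example above shows --- and it is precisely the step that the remark following the theorem flags as difficult for general Dynkin quivers.
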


\begin{remark}
  Actually, Maksimau establishes the existence of parity sheaves for
  any Dynkin quiver. Surprisingly, it seems difficult to show that
  they occur as summands of Lusztig sheaves.
\end{remark}

\subsection{Parity sheaves and the Kleshchev-Ram conjecture}
\label{sec:parity-sheav-kleshch}

In this
section we assume that $Q$ is a Dynkin quiver of type $A$. We fix a
dimension vector $d$. Recall our $p$-modular system $(\KM, 
\OM, \FM)$ from \S \ref{sec:gam}.

Recall that the KLR algebra $R(d)$ is a graded ring, with each graded
component free and finite rank over $\ZM$. Moreover $R(d)$ contains a
polynomial subring and the symmetric polynomials yield a subring $A
\subset R(d)$ in the centre of $R(d)$. In fact, $A$ is equal to the
centre of $R(d)$ \cite[Theorem 2.9]{KL}. Finally, $R(d)$ is free of finite rank over
$A$ by \cite[Corollary 2.10]{KL}. Hence if we set $H_\OM := R(d)
\supset A_\OM$ then $H_\OM$ satisfies the conditions of \S
\ref{sec:gam}.

On the other hand, if we apply the observations in
\S\ref{sec:luszt-sheav-proj} for $\Bbbk \in \{ \KM, \OM, \FM \}$ we
obtain an equivalence of graded categories
\begin{equation} \label{eq:parityKLR}
\langle \LC_{V,\Bbbk} \rangle_{[1],\oplus}^{op} \simto \Proj R(d)_{\Bbbk}.
\end{equation}
By the Theorem \ref{thm:Maks} of Maksimau the indecomposable summands
of $\LC_{V,\Bbbk}$ coincide (up to shifts) with the parity sheaves. We
conclude that the indecomposable graded projective modules (and hence
also the graded simple modules) are parametrised up to shift by
$\Lambda_d$. 

\begin{remark}
  This fact has been established algebraically for any Dynkin quiver
  by Kleshchev and Ram \cite{KR}.
\end{remark}

The following is the main result of this section:

\begin{thm} \label{thm:topologicalKR}
  The following are equivalent:
  \begin{enumerate}
  \item The Kleshchev-Ram conjecture holds for $R(d)$: the
    decomposition map \[[\Rep^f R(d)_{\KM}] \to [\Rep^f
    R(d)_{\FM}]\] is trivial.
\item For all $\l \in \Lambda_d$ the parity complex $\EC(\l, \OM)
  \otimes^L_{\ZM_p} \KM$ is indecomposable.
\item For all $\l \in \Lambda_d$ the stalks and costalks of the
  intersection cohomology complex
  $\ic(\overline{X_\l}, \OM)$ are free of $p$-torsion.
  \end{enumerate}
\end{thm}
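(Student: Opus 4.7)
The plan is to use Brauer reciprocity to reformulate (1) as a statement about extensions of projectives, then apply the equivalence \eqref{eq:parityKLR} and its compatibility with base change to translate this into the indecomposability of $\EC(\l, \OM) \otimes^L_{\ZM_p} \KM$, and finally to invoke the decomposition theorem in characteristic zero to relate this to $p$-torsion in $\ic$.

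For (1) $\iff$ (2): by Lemma \ref{lem:BR}, triviality of $d$ is equivalent to triviality of the extension map $e$, meaning each indecomposable projective class over $\FM$ is sent to an indecomposable projective class over $\KM$. Under \eqref{eq:parityKLR} the indecomposable projective labelled by $\l$ over $\Bbbk$ corresponds to $\EC(\l, \Bbbk)$, and compatibility with base change (part of Maksimau's proof of Theorem \ref{thm:KLRiso}) identifies the $\OM$-lift of $\EC(\l, \FM)$ with $\EC(\l, \OM)$ and its $\KM$-extension with $\EC(\l, \OM) \otimes^L_{\ZM_p} \KM$. The latter is parity with support $\ov{X_\l}$, so by Krull--Schmidt and uniqueness of parity sheaves it is indecomposable if and only if it equals $\EC(\l, \KM)$, giving (1) $\iff$ (2).

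For (2) $\iff$ (3): over $\KM$, applying the decomposition theorem \cite{BBD} to the proper maps $\pi_{\bf i}$ writes $\LC_{V,\KM}$ as a direct sum of shifts of $G_V$-equivariant IC sheaves, so each $\ic(\ov{X_\l}, \KM)$ is parity and, by uniqueness, coincides with $\EC(\l, \KM)$. The complex $\EC(\l, \OM) \otimes^L \KM$ is parity with support $\ov{X_\l}$ and its unique full-support IC summand is $\ic(\ov{X_\l}, \KM)$; hence (2) is equivalent to $\EC(\l, \OM) \otimes^L \KM \cong \ic(\ov{X_\l}, \KM)$ for all $\l$. Since the stalks of $\EC(\l, \OM)$ are free $\OM$-modules, the degrees in which they are non-zero are unchanged by $- \otimes \KM$; thus this isomorphism is equivalent to $\EC(\l, \OM)$ itself satisfying the IC support conditions, i.e., to $\EC(\l, \OM) \cong \ic(\ov{X_\l}, \OM)$. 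Finally, this last condition is equivalent (by the parity definition and uniqueness of parity sheaves) to $\ic(\ov{X_\l}, \OM)$ having stalks and costalks free of $p$-torsion, which is (3).

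The main technical obstacle lies in ruling out `hidden' summands in $\EC(\l, \OM) \otimes^L \KM$; this rests on Krull--Schmidt for parity sheaves (which holds because $\OM$ is complete local) and the uniqueness statement from \cite{JMW}. Granting the existence of parity sheaves in type $A$ (Theorem \ref{thm:Maks}) and Maksimau's compatibility of \eqref{eq:parityKLR} with base change, the remainder of the argument is a formal translation between algebra and topology.
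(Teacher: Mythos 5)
Your proof is correct and follows essentially the same route as the paper: Brauer reciprocity and compatibility of \eqref{eq:parityKLR} with base change give (1) $\iff$ (2), then the decomposition theorem together with the stalk-and-costalk characterisation of intersection cohomology over $\OM$ give (2) $\iff$ (3). The one place to tighten is the final ``equivalent by the parity definition'' step: to conclude from torsion-free stalks and costalks that $\ic(\ov{X_\l},\OM)\cong\EC(\l,\OM)$, you should say explicitly that $\ic(\ov{X_\l},\OM)$ is parity because $\ic(\ov{X_\l},\OM)\otimes_\OM\KM\cong\ic(\ov{X_\l},\KM)$ is parity (a fact you establish earlier but do not re-invoke), as torsion-freeness alone does not give the required parity vanishing.
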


\begin{proof}
  By Brauer reciprocity (Lemma \ref{lem:BR}) statement (1) is
  equivalent to the extension map
\[
e : [\Proj R(d)_\FM] \to [\Proj R(d)_\KM]
\]
being the identity. Now the equivalence of (1) and (2) follows from
the fact that \eqref{eq:parityKLR} is compatible with extension of
scalars.

It remains to show the equivalence of (2) and (3). 

\emph{(2) $\Rightarrow$ (3):} If $\Bbbk = \KM$
then we can apply the decomposition theorem \cite{BBD} to conclude the $\LC_{V,\Bbbk}$ is
isomorphic to a direct sum of shifts of intersection cohomology
complexes. We conclude that $\EC(\l,\KM) \cong \ic(\overline{X_\l},
\KM)$. By the uniqueness of parity sheaves, $\EC(\l, \OM)
\otimes_{\OM} \KM$ is indecomposable if and only if $\EC(\l, \OM)
\otimes_{\OM} \KM \cong \ic(\overline{X_\l}, \KM)$.

Let $\EC$ be a complex of sheaves of $\OM$-modules with
torsion free stalks and costalks. Then $\EC \otimes_\OM \KM \cong
\ic(\overline{X_\l}, \KM)$ if and only if $\EC \cong
\ic(\overline{X_\l}, \OM)$, as follows from the
characterisation of $\ic(\overline{X_\l},\OM)$ in terms of stalks and
costalks \cite[Proposition 2.1.9 and \S~3.3]{BBD}.

Putting these two observations together, we conclude that
$\EC(\l, \OM)\otimes_{\OM} \KM$ is indecomposable if and
only if $\EC(\l,\OM) \cong \ic(\overline{X_\l}, \OM)$. Hence (2)
implies (3), because the stalks and costalks of $\EC(\l,\OM)$ are free
of $p$-torsion by definition.

\emph{(3) $\Rightarrow$ (2)}: If $\ic(\overline{X_\l},\OM)$ has torsion free
stalks and costalks then it is parity (because $\ic(\overline{X_\l},
\OM) \otimes_\OM \KM \cong \ic(\overline{X_\l}, \KM)$ is). Hence
$\ic(\overline{X_\l},\OM) \cong \EC(\l, \OM)$ and 
$\EC(\l, \OM) \otimes_{\OM} \KM \cong \ic(\overline{X_\l},\OM) \otimes_{\OM}
\KM$ is indecomposable. Hence (3) implies (2).
\end{proof}

\begin{remark}
  The advantage of condition (3) in the above theorem is that it is a
  purely topological condition, and hence we can use it to move the
  Kleshchev-Ram conjecture to a question about equivalent
  singularities in the flag variety, where more is known and calculations are easier.
\end{remark}

\begin{remark}
  See \cite[Proposition 3.11]{WB} for results along similar lines to the above theorem.
\end{remark}

\section{Counterexamples}

In this section we assemble some known results due to Braden and Polo
and use Theorem \ref{thm:topologicalKR} to give counterexamples to the
Kleshchev-Ram conjecture for quivers of type $A$.

\subsection{Quiver varieties and flag varieties} We briefly recall a
construction (which I learnt from \cite[\S 8.1]{KS}) which relates an open
subvariety inside the moduli of representations of a type $A$ quiver
to singularities of Schubert varieties in a flag variety of type $A$.

Fix $n \ge 0$ and consider the quiver $Q$ of type $A_{2n-1}$ with
the following orientation:
\[
1 \to 2 \to \dots \to n \from n+1 \from \dots \from 2n-1.
\]
Consider the dimension vector $d = (1, 2, \dots, n, \dots, 2, 1)$. Let
$I$ denote the vertices of $Q$ and let $V$ denote an $I$-graded vector
space with dimension vector $d$. Inside $E_V$ consider the
open subvariety $U$ consisting of representations all of whose arrows are
injective. The product of the automorphisms of $V_i$ for $i \ne n$ act
freely on $U$ and the quotient is isomorphic to the space of pairs on
flags on $V_n \cong \CM^n$, with the natural action of $G =
GL(V_n)$.

Hence, after fixing a Borel subgroup $B \subset G$, the
singularities of the closures of $G_V$-orbits on $U$ are equivalent to
the singularities of closures of $G$ orbits on $G/B \times G/B$, or
equivalently to the singularities of Schubert varieties in
$G/B$. Combining these observations with Theorem
\ref{thm:topologicalKR} we deduce that if the Kleshchev-Ram conjecture
holds then stalks and costalks of all intersection cohomology
complexes of Schubert varieties in $G/B$ are torsion free. The first counterexamples
to this statement were given by Braden in 2004 (see the appendix to \cite{WB}). He gave
examples of $2$-torsion in the costalks of intersection cohomology
complexes on the flag variety of $GL_8$.

For several years since Braden's announcement of his results the
existence of $p \ne 2$ torsion was not 
known. Recently Polo \cite{P} has proved that for all prime numbers $p$
there exists a Schubert variety in the flag variety of $GL_{4p}(\CM)$ whose intersection
cohomology complex has $p$-torsion in
its costalk. It then follows from Theorem \ref{thm:topologicalKR} that
the Kleshchev-Ram conjecture is false for all primes.\footnote{The
  author has recently found even more torsion in flag varieties of
  type $A$ \cite{W}.}

The modules involved in the above counterexamples are enormous
(the first counterexample involves a quiver of type $A_{15}$!). It
seems unlikely that one could verify these 
counterexamples algebraically, even with the help of a powerful
computer. The rest of this paper is devoted to the description of the
Kashiwara-Saito singularity, which occurs in a quiver of type
$A_5$, and is small enough that one (i.e. Jon Brundan) can verify
algebraically that it provides a counterexample.

\subsection{The Kashiwara-Saito singularity}
Let $Q$ denote the $A_5$ quiver:
\[
Q = 1 \to 2 \to 3 \to 4 \to 5.
\]
We identify the vertices $I$ of $Q$ with the simple roots $\a_1,
\dots, \a_5$ of a root system $R$ of type $A_5$. For $1 \le i \le j \le
5$ let $\a_{ij} := \a_i + \dots + \a_j$. Then the positive roots of
$R$ are
$\{ \alpha_{ij} \; | \; 1 \le i \le j \le 5
\}$. Consider the dimension vector $d = 2 \a_{1} + 4 \a_{2} + 4\a_{3} +
4\a_4 + 2 \a_5$ and let $V$ be an $I$-graded vector space with $\gdim
V = d$.

As described in \S~\ref{sec:moduli-repr-dynk}, $G_V$ orbits on $E_V$
are parametrised by the set
\[
\Lambda_d := \left \{\sum_{1 \le i \le j \le 5} \lambda_{ij}\alpha_{ij} \; | \; \sum \lambda_{ij}\alpha_{ij} = d \right \}.
\]
Consider
\begin{gather*}
\sigma := \a_{12} + \a_{23} + \a_{34} + \a_{45} + \a_{14} + \a_{25}\\
\pi := 2(\a_{33} + \a_{12} + \a_{45} + \a_{24}).
\end{gather*}
then $\pi, \sigma \in \Lambda_d$ and we let $X_\pi$ and $X_\sigma$
denote the corresponding $G_V$-orbits on $E_V$.

\begin{prop}
  $\ic(\overline{X_\sigma}, \ZM)$ has 2-torsion in its costalk at any
  point of $X_\pi$.
\end{prop}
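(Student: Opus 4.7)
The plan is to reduce the 2-torsion claim to Braden's known computation on the flag variety of $GL_8$, via the identification (due to Kashiwara and Saito) of a neighborhood of a point of $X_\pi$ in $\overline{X_\sigma}$ with the corresponding singularity in a Schubert variety.

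First I would note that 2-torsion in the costalk of an IC complex is preserved and reflected under smooth base change: if $f \colon Y' \to Y$ is smooth of relative dimension $d$ and $Z \subset Y$ is a locally closed smooth subvariety, then up to a shift $f^{*} \ic(\overline{Z}, \ZM) \cong \ic(\overline{f^{-1}(Z)}, \ZM)$, and $p$-torsion in stalks and costalks is detected faithfully by smooth pullback. It therefore suffices to exhibit a smooth correspondence between a neighborhood of a point of $X_\pi$ in $\overline{X_\sigma}$ and a neighborhood of an appropriate point of a Schubert variety in $GL_8/B$.

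Next I would invoke the Kashiwara-Saito construction of \cite[\S 8]{KS} (an elaboration of the quiver-to-flag correspondence sketched at the beginning of \S 4.1). Restricting to the open subvariety $U \subset E_V$ of representations whose arrows have maximal rank, and taking the quotient by the free action of the non-central factors $\prod_{i \ne 3} GL(V_i)$, one obtains an open subvariety of a product of partial flag varieties for $GL(V_3) = GL_4$. Kashiwara-Saito's doubling trick further identifies this, smoothly, with a neighborhood of a point of a specific Schubert cell in $GL_8/B$; under this identification $\overline{X_\sigma}$ corresponds to the closure of a larger Schubert cell and the chosen point of $X_\pi$ lies in its singular stratum. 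Finally one invokes Braden's explicit calculation in the appendix to \cite{WB}: the IC complex of exactly this Schubert variety has 2-torsion in its costalk at the corresponding point. Combined with step one, this yields the proposition.

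The main obstacle is the combinatorial bookkeeping: one must verify that the tuples $\sigma$ and $\pi$ correspond, under the Kashiwara-Saito identification, to precisely the Weyl group data in Braden's counterexample. The dimension vector $(2,4,4,4,2)$ for the $A_5$ quiver does not directly encode data for $GL_8/B$, so Kashiwara and Saito's doubling construction is exactly what mediates between the two; unwinding this identification to match Braden's Schubert variety is the non-trivial step, and it is the point at which algebraic confirmation (as acknowledged to Brundan and Kleshchev) is needed to check that the correspondence is as claimed.
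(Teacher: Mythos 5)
Your overall strategy matches the paper's: reduce to a known $2$-torsion computation on a Schubert variety in $GL_8/B$ by a chain of smooth equivalences, using the fact that stalks and costalks of IC complexes are invariant up to shift under smooth pullback. However, the specific identification you propose is not the one the paper uses, and as described it does not work.

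The paper's proof passes through an explicit affine intermediate variety $S$: the space of tuples $(M_i)_{i \in \ZM/4\ZM}$ of $2\times 2$ complex matrices with $\rank M_i \le 1$ and $M_iM_{i+1} = 0$. Kashiwara and Saito prove \emph{two} smooth equivalences: $\overline{X_\sigma}$ near a point of $X_\pi$ is smoothly equivalent to $S$ near $0$ (\cite[Lemma 2.2.2]{KS}), and the Schubert variety $X_x$ for $x = 62845173$ near a point of $ByB/B$ for $y = 21654387$ is also smoothly equivalent to $S$ near $0$ (\cite[Example 8.3.1]{KS}). The $2$-torsion in the costalk of $\ic(X_x, \ZM)$ at $y$ is then the ingredient supplied by Braden, the author, and Polo. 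Your proposal omits $S$ entirely.

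Your substitute identification does not apply to this situation. The \S4.1 construction (open locus of injective arrows, quotient by the off-central $GL(V_i)$'s, land in pairs of flags for the middle vertex group) requires the orientation to point towards the middle vertex and the dimension vector to be $(1,2,\dots,n,\dots,2,1)$; with the $A_5$ quiver oriented $1 \to 2 \to 3 \to 4 \to 5$ and dimension vector $(2,4,4,4,2)$, neither condition holds, so the quotient you describe is not an open subvariety of a double flag variety for $GL_4$, and ``doubling'' to $GL_8/B$ in the way you sketch is not what Kashiwara and Saito do. Finally, the algebraic confirmation you allude to (Brundan--Kleshchev) pertains to the downstream decomposition-number statement, not to the geometric smooth equivalence: the latter is fully established in \cite{KS} and needs no computational verification.
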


\begin{remark}
It is a result due to Kashiwara and
Saito (see \cite[Theorem 7.2.1]{KS} and the subsequent remark) that the singular support of the
intersection cohomology $D$-module on $\overline{X_\pi}$ is equal to
the union of the closures of the conormal bundles to $X_\pi$ and $X_\sigma$. In
particular it is reducible. Braden \cite{B2} confirms that the characteristic cycle is equal to the sum of
the fundamental classes.
\end{remark}

\begin{proof} Let $M_n(\CM)$ denote the space of $n \times n$-complex matrices over
$\CM$. Consider the space $S$ of matrices $M_i \in M_2(\CM)$ for $i \in \ZM/4\ZM$
satisfying the two conditions
\begin{gather}
  \label{eq:1}
\rank M_i \le 1 \text{ for $i \in \ZM/4\ZM$,} \\
M_iM_{i+1} = 0 \text{ for $i \in \ZM/4\ZM$}.
\end{gather}
Clearly $S$ is an affine variety. One can show that its dimension is
8. We call $S$ (or more precisely the singularity of $S$ at $0 \in S$)
the \emph{Kashiwara-Saito} singularity. It is known \cite[Lemma
2.2.2]{KS} that the singularity of $\overline{X_\sigma}$ at $X_\pi$
is smoothly equivalent to the singularity of $S$ at $0$.

Let $G = GL_8(\CM)$ and $B$ denote the subgroup of upper triangular
matrices, and identify the Weyl group of $G$ with the permutation
matrices. Consider the Schubert variety $X_x = \overline{BxB/B}$ where
$x$ is the permutation $x = 62845173$. Then it is not difficult to
check that the singularity of $X_x$ along the Schubert
variety $ByB/B$ where $y = 21654387$ is smoothly equivalent the
singularity of $S$ at 0 (see \cite[Example 8.3.1]{KS}).

Because the stalks and costalks of the intersection cohomology
complexes are invariant (up to a shift) under smooth equivalence, the
proposition follows once one knows that $\ic(X_x, \ZM)$ has
2-torsion in its costalk at $y$. This has been
verified by Braden (using similar techniques to those used in the
appendix to
\cite{WB}), by the author (using generators and
relations for Soergel bimodules \cite{EW}) and by Polo \cite{P} (his
examples of torsion for all primes $p$ gives this example for $p = 2$).
\end{proof}

By Theorem \ref{thm:topologicalKR} we conclude that the projective
module $P(\sigma,\ZM_2)$ for $R(d)_{\ZM_2}$ corresponding to
$\EC(\sigma,\ZM_2)$ is decomposable when we tensor with $\QM_2$. Hence
the Kleshchev-Ram conjecture fails for $R(d)$. One may show that
\[
\EC(\sigma,\ZM_2) \otimes_{\ZM_2} \QM_2 \cong \ic(\overline{X_\sigma},
\QM_2) \oplus \ic(\overline{X_\pi},\QM_2).
\]
By Brauer reciprocity it follows that the simple module for $R(d)$
indexed by $\pi$ becomes reducible when one reduces modulo $2$. In
fact, in $[\Rep R(d)_{\FM_2}]$ one has
\[
[L(\pi, \ZM_2)\otimes_{\ZM_2} \FM_2] = [L(\pi, \FM_2)] + [L(\sigma,\FM_2)].
\]
where $L(\pi,\ZM_2)$ denotes an integral form of the
$R(d)_{\QM_2}$-module labelled by $\pi$, and $L(\pi,\FM_2)$ and
$L(\sigma,\FM_2))$ denote the simple $[\Rep R(d)_{\FM_2}]$-modules
labelled by $\sigma$ and $\pi$ respectively.
This has been verified by direct algebraic computations by Jon Brundan
(aided by his computer) and Alexander Kleshchev (with his bare
hands) and has recently become available \cite[2.6]{BK2}.

\end{document}